\newtheorem{theorem}{Theorem}[section]
\newtheorem{lemma}[theorem]{Lemma}
\newtheorem{proposition}[theorem]{Proposition}
\newtheorem{corollary}[theorem]{Corollary}
\theoremstyle{definition}
\newtheorem{definition}[theorem]{Definition}
\newtheorem{example}[theorem]{Example}
\newtheorem*{acknowledgement}{Acknowledgment}
\theoremstyle{remark}
\newtheorem{remark}[theorem]{Remark}
\numberwithin{equation}{section}
\DeclareMathOperator{\conv}{CONV}
\def\Ac{{\mathcal A}}
\def\Bc{{\mathcal B}}
\def\Hc{{\mathcal H}}
\def\Fc{{\mathcal F}}
\def\Pc{{\mathcal P}}
\def\Qc{{\mathcal Q}}
\def\NZQ{\mathbb}               
\def\ZZ{{\NZQ Z}}
\def\RR{{\NZQ R}}
\def\hugesymbol#1{\mbox{\strut\rlap{\smash{\Huge$#1$}}\quad}}
\begin{document}
\title{The $\delta$-vectors of reflexive polytopes and of the dual polytopes}
\author{Akiyoshi Tsuchiya}
\subjclass[2010]{52B20}
\thanks{
	{\bf Keywords:}
	reflexive polytope, Ehrhart polynomial, $\delta$-vector, unimodularly equivalent.
}
\address[Akiyoshi Tsuchiya]{Department of Pure and Applied Mathematics,
	Graduate School of Information Science and Technology,
	Osaka University,
	Suita, Osaka 565-0871, Japan}
\email{a-tsuchiya@cr.math.sci.osaka-u.ac.jp}
\begin{abstract}
	Let $\delta(\Pc)$ be the $\delta$-vector of a reflexive polytope $\Pc \subset \RR^d$ of dimension $d$ and $\delta(\Pc^\vee)$  the $\delta$-vector of the dual polytope $\Pc^\vee \subset \RR^d$.
	In general, $\delta(\Pc)=\delta(\Pc^\vee)$ does not hold.
	In this paper, we give a higher-dimensional construction of a reflexive polytope whose $\delta$-vector equals the $\delta$-vector of the dual polytope.
	In particular, we consider the case that the reflexive polytope and the dual polytope are unimodularly equivalent.
\end{abstract}
\maketitle
\section*{INTRODUCTION}
Let $\Pc \subset \RR^d$ be an integral convex polytope, that is, a convex polytope whose vertices have integer coordinates, of dimension $d$.
Given integers $n=1,2,\ldots,$ we write $i(\Pc,n)$ 
for the number of integer points belonging to $n\Pc$, 
where $n\Pc=\left\{n\alpha : \alpha \in \Pc \right\}$.
In other words,
$$i(\Pc,n)=|n\Pc \cap  \ZZ^d|, \ \ \ n=1,2,\ldots .$$
In the late 1950s, Ehrhart succeeded in proving that $i(\Pc, n)$ 
is a polynomial in $n$ of degree $d$ with $i(\Pc,0)=1$.
(\cite[Example 35.11]{Hibi}).
		We call $i(\Pc,n)$ the \textit{Ehrhart \ polynomial} of $\Pc$.
	
	The generating function of the integral point enumerator, i.e., the formal power series
	$$\text{Ehr}_\Pc(t)=1+\sum\limits_{n=1}^{\infty}i(\Pc,n)t^n$$
	is called the \textit{Ehrhart series} of $\Pc$.
	It is well known that it can be expressed as a rational function of the form
	$$\text{Ehr}_\Pc(t)=\frac{\delta_0+\delta_1t+\cdots + \delta_dt^d}{(1-t)^{d+1}}.$$
	The sequence of the coefficients of the polynomial in the numerator 
	$$\delta(\Pc)=(\delta_0,\delta_1,\ldots,\delta_d)$$
	is called the \textit{$\delta$-vector} of $\Pc$.
	
	The $\delta$-vector has the following properties:
	\begin{itemize}
		\item$\delta_0=1,\delta_1=|\Pc \cap \ZZ^d|-(d+1)$ and $\delta_d=|(\Pc\setminus \partial \Pc) \cap \ZZ^d|$. Hence, $\delta_1 \geq \delta_d$; 
		\item Each $\delta_i$ is nonnegative (\cite{RS_DRCP});
		\item If $\delta_d \neq 0$, then one has $\delta_1 \leq \delta_i$ for every $1 \leq i \leq d-1$ (\cite{H_LBTEP}).
	\end{itemize}
	We refer the reader to \cite{Beck,Ehrhart,Hibi,RS_ECV1,RS_OHGCM,RS_MPH} for further informations on Ehrhart 
	polynomials and $\delta$-vectors.
	
	An integral  convex polytope is called \textit{reflexive} if the origin 
	of $\RR^d$ is a unique integer point belonging to the 
	interior $\Pc-\partial \Pc$ of $\Pc$ 
	and its dual 
	$$\Pc^\vee:=\left\{y \in \RR^d :\langle x,y \rangle \leq 1 \ \text{for all} \ x\in \Pc \right\}$$
	is also an integral polytope,
	where $\langle x,y \rangle$ is the 
	usual inner product of $\RR^d$.
	
	Let $\Pc \subset \RR^d$ be an integral convex polytope of dimension $d$ containing the origin in its interior 
	and $\delta(\Pc)=(\delta_0,\delta_1,\ldots,\delta_d)$ its $\delta$-vector.
	It follows from \cite{Hibi_DPRCP} that the following 
	conditions are equivalent:
	\begin{itemize}
		\item $\Pc$ is reflexive;
		\item $\delta(\Pc)$ is symmetric, i.e.,
		$\delta_i=\delta_{d-i}$ for every $0 \leq i \leq d$.
	\end{itemize}
	Let $\Pc \subset \RR^d$ be a reflexive polytope of dimension $d$.
	In general, $\delta(\Pc)=\delta(\Pc^\vee)$ does not hold.

It is known from work of Lagarias and Ziegler (\cite{Lag}) that there are only finitely many reflexive polytopes (up to unimodular equivalence) in each dimension, with one reflexive polytope in dimension one, $16$ in dimension two, $4,319$ in dimension three, and $473,800,776$ in dimension four according to computations by Kreuzer and Skarke (\cite{Kre}).
By computing the number of reflexive polytopes whose $\delta$-vectors equal the $\delta$-vectors of the dual polytopes, we find that there are $4$ such reflexive polytopes in dimension two and $327$ in dimension three.

It is known that for each $d \geq 2$ there exists a reflexive simplex  of dimension $d$ whose $\delta$-vector equals the $\delta$-vector of the dual polytope (\cite{Nill}).
In particular, this reflexive simplex and the dual polytope are unimodularly equivalent.
However,  other examples of such a reflexive polytope are known very little.

In this paper, we give examples of a reflexive polytope whose $\delta$-vector equals the $\delta$-vector of the dual polytope.
In section 1, we give a higher-dimensional construction of a reflexive polytope whose $\delta$-vector equals the $\delta$-vector of the dual polytope (Theorem \ref{main}).
In section 2, we give a new example of a reflexive simplex whose $\delta$-vector equals the $\delta$-vector of the dual polytope (Theorem \ref{sim}).

\section{a higher-dimensional construction of a special reflexive polytope}

Let $\ZZ^{d \times d}$ denote the set of $d \times d$ integral matrices.
Recall that a matrix $A \in \ZZ^{d \times d}$ is {\em unimodular} if $\det (A) = \pm 1$.
Given integral convex polytopes $\Pc$ and $\Qc$ in $\RR^d$ of dimension $d$,
we say that $\Pc$ and $\Qc$ are {\em unimodularly equivalent}
if there exists a unimodular matrix $U \in \ZZ^{d \times d}$
and an integral vector $w$, such that $\Qc=f_U(\Pc)+w$,
where $f_U$ is the linear transformation in $\RR^d$ defined by $U$,
i.e., $f_U({\bf v}) = {\bf v} U$ for all ${\bf v} \in \RR^d$.
In this case we write $\Pc \cong \Qc$.
Clearly, if $\Pc \cong \Qc$, then
$\delta(\Pc) = \delta(\Qc)$.
Moreover, if $\Pc \subset \RR^d$, then $\text{Vol}(\Pc)$ denotes the (normalized) \textit{volume} of $\Pc$, i.e., $d!$ times the usual euclidean volume of $\Pc$.

\begin{example}
	\label{eq_2d}
	Let $\Pc \subset \RR^2$ be the reflexive polytope with the vertices $(1,0)$, $(-1,2)$ and $(-1,-1)$.
	Then the dual polytope $\Pc^\vee$ has the vertices $(-1,0)$, $(1,-2)$ and $(1,1)$.
	Clearly, $\Pc \cong \Pc^\vee$.
	Therefore $\delta(\Pc)=\delta(\Pc^\vee)$.
\end{example}

However,  other examples of such a reflexive polytope, i.e., a reflexive polytope and the dual polytope are unimodularly equivalent, are known very little.
In this section, we give a higher-dimensional construction of a reflexive polytope whose $\delta$-vector equals the $\delta$-vector of the dual polytope.

For $d \geq 2$ and  an integral convex polytope $\Pc \subset \RR^{d-1}$ of 
dimension $d-1$, we set
\begin{displaymath}
\begin{aligned}
\Ac(\Pc)&=\Pc \times [-1,1],\\
\Bc(\Pc)&=\conv (\left\{\Pc \times \{0\},(0,0,\ldots,0,1),(0,0,\ldots,0,-1)\right\}) \subset \RR^{d},\\
\Gamma(\Pc)&=\conv (\left\{\Pc \times [-1,0],(0,0,\ldots,0,1)\right\}) \subset \RR^{d}.
\end{aligned}
\end{displaymath}
We recall that if $\Pc$ is reflexive, then $\Ac(\Pc)$ and $\Bc(\Pc)$ are also reflexive.
Moreover, we have $\Ac(\Pc)^\vee =\Bc(\Pc^\vee)$ and $\Bc(\Pc)^\vee = \Ac(\Pc^\vee)$.
$\Gamma(\Pc)$ is an analogy between $\Ac(\Pc)$ and $\Bc(\Pc)$.

At first, we show that if $\Pc$ is reflexive, then $\Gamma(\Pc)$ is a reflexive polytope of dimension $d$.

\begin{proposition}
	\label{refl}
	For $d \geq 2$, let $\Pc \subset \RR^{d-1}$ be a reflexive polytope of 
	dimension $d-1$.
	Then $\Gamma(\Pc)$ is a reflexive polytope of dimension $d$.
	Moreover, $\Gamma(\Pc^\vee) \cong \Gamma(\Pc)^\vee$.
\end{proposition}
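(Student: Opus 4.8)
The plan is to determine the facets of $\Gamma(\Pc)$ explicitly and read off both reflexivity and the dual from them. Write points of $\RR^d$ as $(x,h)$ with $x\in\RR^{d-1}$ and $h\in\RR$ the last coordinate. Since $\Pc$ is reflexive, each of its facets $G$ lies on a hyperplane $\langle b_G,x\rangle=1$ with $b_G\in\ZZ^{d-1}$, the $b_G$ are exactly the vertices of $\Pc^\vee$, and $\Pc=\{x:\langle b_G,x\rangle\le 1 \text{ for all }G\}$ with $\mathrm{int}(\Pc)\cap\ZZ^{d-1}=\{0\}$. First I would establish the $\calH$-description
\[
\Gamma(\Pc)=\bigl\{(x,h):\ h\ge -1,\ \langle b_G,x\rangle\le 1,\ \langle b_G,x\rangle+h\le 1 \ \text{for every facet } G \text{ of } \Pc\bigr\}.
\]
The inclusion $\subseteq$ is immediate by testing the generators $\Pc\times[-1,0]$ and $(0,\ldots,0,1)$. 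For $\supseteq$, any $(x,h)$ in the right-hand side has $x\in\Pc$; if $h\le 0$ then $(x,h)\in\Pc\times[-1,0]\subseteq\Gamma(\Pc)$, while if $h>0$ the inequalities $\langle b_G,x\rangle\le 1-h$ force $x/(1-h)\in\Pc$ (with $x=0$ when $h=1$), so $(x,h)=(1-h)\,(x/(1-h),0)+h\,(0,\ldots,0,1)$ lies in $\Gamma(\Pc)$.

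Next I would check that each of the three families defines a genuine facet of dimension $d-1$: the bottom $\{h=-1\}$, with normal $(0,\ldots,0,-1)$; the lower sides $\{\langle b_G,x\rangle=1\}$ carrying $G\times[-1,0]$, with normal $(b_G,0)$; and the upper sides $\{\langle b_G,x\rangle+h=1\}$ carrying $\conv(G\times\{0\}\cup\{(0,\ldots,0,1)\})$, with normal $(b_G,1)$. I would also note that $\Pc\times\{0\}$ is \emph{not} a facet, since any $(x_0,0)$ with $x_0\in\mathrm{int}(\Pc)$ is interior to $\Gamma(\Pc)$. The vertices of $\Gamma(\Pc)$ are the integral points $V(\Pc)\times\{-1,0\}$ together with $(0,\ldots,0,1)$, so $\Gamma(\Pc)$ is an integral $d$-polytope with $0$ in its interior, and every facet lies on a hyperplane $\langle a,\cdot\rangle=1$ with $a\in\ZZ^d$; by the standard criterion this makes $\Gamma(\Pc)$ reflexive. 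Uniqueness of $0$ as interior lattice point follows at once from the $\calH$-description together with $\mathrm{int}(\Pc)\cap\ZZ^{d-1}=\{0\}$.

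For the dual, the same criterion identifies the vertices of $\Gamma(\Pc)^\vee$ with the facet normals just found:
\[
\Gamma(\Pc)^\vee=\conv\bigl(\{(0,\ldots,0,-1)\}\cup\{(b_G,0),(b_G,1):G \text{ facet of } \Pc\}\bigr).
\]
Since $\{b_G\}$ is the vertex set of $\Pc^\vee$, the points $(b_G,0)$ and $(b_G,1)$ have convex hull $\Pc^\vee\times[0,1]$, so $\Gamma(\Pc)^\vee=\conv\bigl((\Pc^\vee\times[0,1])\cup\{(0,\ldots,0,-1)\}\bigr)$. Finally I would apply the unimodular map given by $U=\mathrm{diag}(1,\ldots,1,-1)$, which negates the last coordinate: it sends $\Pc^\vee\times[0,1]$ to $\Pc^\vee\times[-1,0]$ and $(0,\ldots,0,-1)$ to $(0,\ldots,0,1)$, whence $f_U(\Gamma(\Pc)^\vee)=\conv\bigl((\Pc^\vee\times[-1,0])\cup\{(0,\ldots,0,1)\}\bigr)=\Gamma(\Pc^\vee)$. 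Thus $\Gamma(\Pc^\vee)\cong\Gamma(\Pc)^\vee$ (with translation vector $w=0$).

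The one step needing genuine care, as opposed to routine verification, is the facet analysis: recognizing that there are two distinct families of side facets, the ``lower'' prism sides with normal $(b_G,0)$ and the ``upper'' pyramidal sides with normal $(b_G,1)$, while $\Pc\times\{0\}$ is absorbed into the interior. Pinning this down rigorously is exactly what the $\calH$-description accomplishes; once the facet normals are in hand, both reflexivity and the identification of $\Gamma(\Pc)^\vee$ with a reflected copy of $\Gamma(\Pc^\vee)$ are immediate.
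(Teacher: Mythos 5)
Your proof is correct and follows essentially the same route as the paper: identify the facet normals of $\Gamma(\Pc)$ as $(b_G,1)$, $(b_G,0)$ and $(0,\ldots,0,-1)$, then invoke the vertex--facet duality criterion to get both reflexivity and the dual. You supply two details the paper leaves implicit --- the full $\calH$-description justifying the facet list, and the explicit reflection $U=\mathrm{diag}(1,\ldots,1,-1)$ realizing $\Gamma(\Pc^\vee)\cong\Gamma(\Pc)^\vee$ --- but the argument is the same.
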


Before proving Proposition \ref{refl}, we give the following lemma.

\begin{lemma}[{\cite[Corollary 35.6]{Hibi}}]
	\label{facet}
	Let $\Pc \subset \RR^d$ be an integral convex polytope 
	dimension $d$ containing the origin in its interior. 
	Then a point $a \in \RR^d$ is a vertex of $\Pc^\vee$ if 
	and only if $\Hc \cap \Pc$ is a facet of $\Pc$,
	where $\Hc$ is the hyperplane
	$$\left\{ x \in \RR^d \ | \ \langle a, x \rangle =1 \right\}$$
	in $\RR^d$.
\end{lemma}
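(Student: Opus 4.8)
The plan is to extract both a vertex criterion for $\Pc^\vee$ and a dimension count for $\Pc$ from one and the same set of tight inequalities, so that the biconditional falls out in a single stroke. First I would record that $\Pc^\vee$ is genuinely a polytope: since $0$ lies in the interior of $\Pc$, a small ball about $0$ sits inside $\Pc$, which bounds $\Pc^\vee$. Writing $\Pc=\conv\{v_1,\dots,v_n\}$ with $v_1,\dots,v_n$ the vertices of $\Pc$, linearity of the pairing lets me replace ``$\langle x,y\rangle\le 1$ for all $x\in\Pc$'' by the finite system at the vertices, so
\begin{displaymath}
\Pc^\vee=\{\,y\in\RR^d : \langle v_i,y\rangle\le 1,\ i=1,\dots,n\,\}.
\end{displaymath}
For a fixed $a\in\Pc^\vee$ I set $\Hc=\{x : \langle a,x\rangle=1\}$ and let $I=\{\,i : \langle v_i,a\rangle=1\,\}$ index the constraints tight at $a$.

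Next I would invoke two standard facts. The basic-feasible-solution criterion for a polyhedron says that $a$ is a vertex of $\Pc^\vee$ exactly when the tight functionals span $\RR^d$, i.e.\ $\operatorname{rank}\{v_i : i\in I\}=d$. The face description of a polytope says that, since $\langle a,x\rangle\le 1$ on $\Pc$, the set $\Hc\cap\Pc$ is a face whose vertices are precisely the vertices of $\Pc$ lying on $\Hc$, so $\Hc\cap\Pc=\conv\{v_i : i\in I\}$. The crucial structural point, and the place where the hypothesis $0\in\operatorname{int}\Pc$ is used, is that $\Hc$ avoids the origin because $\langle a,0\rangle=0\neq1$.

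The heart of the argument is then a one-line bridge between the two facts. Because every $v_i$ with $i\in I$ lies on the hyperplane $\Hc$ that misses $0$, the linear span of these points has dimension exactly one more than their affine hull, so
\begin{displaymath}
\operatorname{rank}\{v_i : i\in I\}=\dim(\Hc\cap\Pc)+1 .
\end{displaymath}
Chaining this with the two facts yields
\begin{displaymath}
a\ \text{is a vertex of}\ \Pc^\vee
\iff \operatorname{rank}\{v_i : i\in I\}=d
\iff \dim(\Hc\cap\Pc)=d-1
\iff \Hc\cap\Pc\ \text{is a facet of}\ \Pc,
\end{displaymath}
which is the claim; the degenerate case $I=\varnothing$ is automatically consistent, since then $a$ is interior to $\Pc^\vee$ (not a vertex) and $\Hc\cap\Pc=\varnothing$ (not a facet).

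I expect the main obstacle to be book-keeping rather than conceptual: making the rank-versus-dimension bridge airtight, since the shift by one between linear rank and affine dimension is valid only because $\Hc$ misses the origin. Once that is pinned down, the symmetric shape of the biconditional delivers both implications simultaneously, so there is no need to prove one direction and then appeal to biduality $(\Pc^\vee)^\vee=\Pc$ to obtain the other.
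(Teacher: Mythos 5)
The paper gives no proof of this lemma at all: it is quoted as a known result from Hibi's book (Corollary 35.6), so there is no internal argument to compare yours against. Judged on its own merits, your proof is essentially sound and is the standard polarity/LP argument: reducing $\Pc^\vee$ to the finitely many vertex inequalities $\langle v_i,y\rangle\le 1$, invoking the basic-feasible-solution criterion (a point of the polyhedron is a vertex iff the tight constraint vectors $\{v_i: i\in I\}$ have rank $d$), identifying $\Hc\cap\Pc=\conv\{v_i : i\in I\}$ as the exposed face, and then the rank-versus-affine-dimension bridge $\operatorname{rank}\{v_i:i\in I\}=\dim(\Hc\cap\Pc)+1$, which is indeed valid precisely because the $v_i$ with $i\in I$ lie on a hyperplane avoiding the origin. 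Chaining these does yield the biconditional, and you correctly flag where the hypothesis $0\in\operatorname{int}\Pc$ enters.

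One completeness point needs attention: you fix $a\in\Pc^\vee$ at the outset, but the lemma quantifies over arbitrary $a\in\RR^d$. For the direction ``$\Hc\cap\Pc$ is a facet $\Rightarrow$ $a$ is a vertex of $\Pc^\vee$'' you must first verify that such an $a$ automatically lies in $\Pc^\vee$: if $\Hc\cap\Pc$ is a $(d-1)$-dimensional face, then $\Hc$ is its affine hull and hence a supporting hyperplane of $\Pc$, so $\Pc$ lies in one of the two closed halfspaces bounded by $\Hc$; since $0\in\operatorname{int}\Pc$ and $\langle a,0\rangle=0<1$, it must be the halfspace $\langle a,x\rangle\le 1$, i.e. $a\in\Pc^\vee$. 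This uses only ingredients already in your write-up (uniqueness of the supporting hyperplane through a facet and the origin-interior hypothesis), so the omission is cosmetic rather than structural; with that one line inserted, your proof is complete.
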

Now, we prove Proposition \ref{refl}.

\begin{proof}[Proof of Proposition \ref{refl}]
	Let $\Fc_1,\ldots,\Fc_s$ be facets of $\Pc$ and for 
	$1 \leq i \leq s$,
	and let $\Hc_i$ be the hyperplane 
	satisfying $\Fc_i=\Pc \cap \Hc_i$.
	Then $\Gamma(\Pc)$ has $2s+1$ facets.
	By Lemma \ref{facet}, 
	we can assume that for $1 \leq i \leq s$,
	$$\Hc_i=\left\{ x \in \RR^{d-1} \ | \ \langle a_i,x \rangle =1\right\},$$
	where $a_i \in \ZZ^{d-1}$.
	Set 
	\begin{displaymath}
	\Fc _i'=\left\{        
	\begin{aligned} 
	&\conv(\left\{\Fc_i \times \left\{ 0 \right\},(0,\ldots, 0,1)\right\}) &i&=1,\ldots ,s, \\
	&\Fc _{i-s} \times  [-1,0] &i&=s+1,\ldots ,2s, \\
	&\Pc \times \left\{ -1 \right\} &i&=2s+1.
	\end{aligned}
	\right.
	\end{displaymath}
	Then $\Fc'_1,\ldots,\Fc_{2s+1}'$ are facets of $\Gamma(\Pc)$.
	For $1 \leq i \leq 2s+1$ let $\Hc_i'$ be the hyperplane 
	satisfying $\Fc_i'=\Gamma(\Pc) \cap \Hc_i'$.
	Then 
	\begin{displaymath}
	\Hc_i'=\left\{        
	\begin{aligned} 
	&\left\{x \in \RR ^{d} \ | \ \langle (a_i,1),x \rangle =1 \right\} &i&=1,\ldots ,s,\\
	&\left\{x \in \RR ^{d} \ | \ \langle (a_{i-s},0),x\rangle =1 \right\} &i&=s+1,\ldots ,2s,\\
	&\left\{x \in \RR ^{d} \ | \ \langle (0,\ldots ,0,-1),x \rangle =1 \right\} &i&=2s+1.
	\end{aligned}
	\right.
	\end{displaymath}
	Hence by Lemma \ref{facet}, $\Gamma(\Pc)$ is a reflexive polytope of dimension $d$.
	
	Moreover, since $a_1,\ldots,a_s$ are the vertices of $\Pc^\vee$, it clearly follows that  $\Gamma(\Pc^\vee) \cong \Gamma(\Pc)^\vee$.
\end{proof}

Next, we present a direct formula for the computation of the $\delta$-vector of $\Gamma(\Pc)$ in terms of the $\delta$-vector of $\Pc$.

\begin{proposition}
	\label{maindelta}
	For $d \geq 2$, let $\Pc \subset \RR^{d-1}$ be an integral convex polytope of 
	dimension $d-1$, and
	we let $\delta(\Pc)=(\delta_0(\Pc),\delta_1(\Pc),\dots,\delta_{d-1}(\Pc))$ and
	$\delta(\Gamma(\Pc))=(\delta_0(\Gamma(\Pc)),\delta_1(\Gamma(\Pc)),\dots,\delta_{d}(\Gamma(\Pc)))$ be the $\delta$-vectors of $\Pc$ and $\Gamma(\Pc)$.
	Then for $i=0,1,\ldots, d$, we have 
	$$\delta_i(\Gamma(\Pc))=(i+1)\delta_i(\Pc)+(d-i+1)\delta_{i-1}(\Pc),$$
	where $\delta_{-1}(\Pc)=\delta_{d}(\Pc)=0$.	
\end{proposition}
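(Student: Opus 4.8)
The plan is to compute the Ehrhart series of $\Gamma(\Pc)$ directly, by slicing the dilate $n\,\Gamma(\Pc)$ along the last coordinate, and then to read off the numerator. Write a point of $\RR^{d}$ as $(x,h)$ with $x\in\RR^{d-1}$ and $h\in\RR$. Since $\Gamma(\Pc)=\conv(\Pc\times[-1,0]\cup\{(\mathbf 0,1)\})$ is the prism $\Pc\times[-1,0]$ capped by the pyramid with apex $(\mathbf 0,1)$, and the apex projects to the origin, which lies in $\Pc$, shrinking toward the apex keeps the cross-sections inside $\Pc$; hence the fiber of $n\,\Gamma(\Pc)$ over an integer height $h$ equals $n\Pc$ for $-n\le h\le 0$ and $(n-h)\Pc$ for $0\le h\le n$. (This identification of the fibers is exactly where one uses that $\mathbf 0\in\Pc$, which holds in our setting since $\Pc$ is reflexive; it is the one genuinely geometric step.)

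Counting lattice points height by height, and using that each of the $n+1$ integer heights in $[-n,0]$ contributes a copy of $n\Pc$, gives
\[
i(\Gamma(\Pc),n)=\underbrace{(n+1)\,i(\Pc,n)}_{-n\le h\le 0}\;+\;\underbrace{\sum_{h=1}^{n} i(\Pc,n-h)}_{1\le h\le n}=(n+1)\,i(\Pc,n)+\sum_{k=0}^{n-1} i(\Pc,k),
\]
where the reindexing $k=n-h$ turns the second sum into a partial sum of the Ehrhart polynomial of $\Pc$; one checks directly that the identity also holds at $n=0$. No lattice point is double-counted at the interface $h=0$, since that height is accounted for once, in the prism block.

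Summing over $n\ge 0$ and invoking the standard generating-function identities $\sum_{n\ge 0}(n+1)i(\Pc,n)t^n=\bigl(t\,\text{Ehr}_\Pc(t)\bigr)'$ and $\sum_{n\ge 0}\bigl(\sum_{k=0}^{n-1}i(\Pc,k)\bigr)t^n=\frac{t}{1-t}\,\text{Ehr}_\Pc(t)$, where $'$ denotes differentiation in $t$, I obtain
\[
\text{Ehr}_{\Gamma(\Pc)}(t)=\bigl(t\,\text{Ehr}_\Pc(t)\bigr)'+\frac{t}{1-t}\,\text{Ehr}_\Pc(t).
\]
Now substitute $\text{Ehr}_\Pc(t)=\delta_\Pc(t)/(1-t)^{d}$, where $\delta_\Pc(t)=\sum_{i=0}^{d-1}\delta_i(\Pc)t^i$ (recall $\dim\Pc=d-1$), carry out the differentiation, and place everything over the common denominator $(1-t)^{d+1}$. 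After cancellation the numerator becomes
\[
\delta_{\Gamma(\Pc)}(t)=(1+dt)\,\delta_\Pc(t)+t(1-t)\,\delta_\Pc'(t),
\]
a polynomial of degree at most $d$; this already exhibits $\text{Ehr}_{\Gamma(\Pc)}(t)$ in the required Ehrhart form.

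Finally I extract the coefficient of $t^i$. Writing $\delta_i=\delta_i(\Pc)$, the term $(1+dt)\delta_\Pc(t)$ contributes $\delta_i+d\,\delta_{i-1}$, while $t(1-t)\delta_\Pc'(t)$ contributes $i\,\delta_i-(i-1)\,\delta_{i-1}$; adding these yields $(i+1)\delta_i+(d-i+1)\delta_{i-1}$, which is precisely the claimed formula, with the conventions $\delta_{-1}(\Pc)=\delta_{d}(\Pc)=0$ handling the end cases $i=0$ and $i=d$. I expect the only real obstacle to be the geometric fiber analysis in the first step—correctly seeing that the lower fibers are $\Pc$ and the upper fibers are the integer dilates $(n-h)\Pc$ (both relying on $\mathbf 0\in\Pc$)—while the remaining generating-function manipulation is routine bookkeeping.
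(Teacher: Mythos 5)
Your proof is correct and rests on the same geometric decomposition as the paper's: the prism $\Pc\times[-1,0]$ below height $0$ and the pyramid over $\Pc\times\{0\}$ above it, with the middle slice counted exactly once. The paper packages the two pieces as cited lemmas (the prism $\delta$-vector formula and the pyramid identity $\mathrm{Ehr}_{\Qc}(t)=\mathrm{Ehr}_{\Pc}(t)/(1-t)$) and combines them by inclusion--exclusion of Ehrhart series, whereas you carry out the slice count and the generating-function calculus by hand; you are also right to flag that the fiber identification needs $\mathbf{0}\in\Pc$, a hypothesis the proposition's statement omits (the paper's decomposition $\Gamma(\Pc)=\Qc_1\cup\Qc_2$ needs it just as much) but which holds in all of the paper's applications.
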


In order to prove Proposition \ref{maindelta}, we use the following lemmas.

\begin{lemma}
	\label{prismdelta}
	For $d \geq 2$, let $\Pc \subset \RR^{d-1}$ be an integral convex polytope of 
	dimension $d-1$.
	Set $$\Qc=\Pc \times [0,1] \subset \RR^{d},$$
	and we let $\delta(\Pc)=(\delta_0(\Pc),\delta_1(\Pc),\dots,\delta_{d-1}(\Pc))$ and
	$\delta(\Qc)=(\delta_0(\Qc),\delta_1(\Qc),\dots,\delta_{d}(\Qc))$ be the $\delta$-vectors of $\Pc$ and $\Qc$.
	Then for $i=0,1,\ldots, d$, we have 
	$$\delta_i(\Qc)=(i+1)\delta_i(\Pc)+(d-i)\delta_{i-1}(\Pc),$$
	where $\delta_{-1}(\Pc)=\delta_{d}(\Pc)=0$.
\end{lemma}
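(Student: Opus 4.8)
The plan is to work entirely at the level of Ehrhart series and reduce the whole statement to a single differentiation. The starting observation is purely geometric: for every $n \geq 1$ we have $n\Qc = n\Pc \times [0,n]$, and the segment $[0,n]$ contains exactly $n+1$ integer points, namely $0,1,\ldots,n$. Since a lattice point of $\RR^d = \RR^{d-1}\times\RR$ lies in $n\Qc$ precisely when its first $d-1$ coordinates give a point of $n\Pc$ and its last coordinate is an integer in $[0,n]$, the lattice points of $n\Qc$ fiber over those of $n\Pc$ with $n+1$ points in each fiber. Hence
$$i(\Qc,n) = (n+1)\,i(\Pc,n) \qquad \text{for all } n \geq 0,$$
which also gives $i(\Qc,0)=1$ as it must.

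Next I would pass to generating functions. Writing $A(t)=\text{Ehr}_\Pc(t)=\sum_{n\geq 0} i(\Pc,n)t^n$, the relation above yields
$$\text{Ehr}_\Qc(t) = \sum_{n\geq 0}(n+1)\,i(\Pc,n)\,t^n = \frac{d}{dt}\bigl(t\,A(t)\bigr).$$
Since $\Pc$ has dimension $d-1$, we may substitute $A(t)=h(t)/(1-t)^d$ with $h(t)=\sum_{i=0}^{d-1}\delta_i(\Pc)\,t^i$. Setting $g(t)=t\,h(t)$ and differentiating $g(t)/(1-t)^d$ by the quotient rule, the factor $(1-t)^{d-1}$ cancels and one obtains
$$\text{Ehr}_\Qc(t) = \frac{g'(t)(1-t)+d\,g(t)}{(1-t)^{d+1}}.$$
Because $\Qc$ has dimension $d$, the denominator $(1-t)^{d+1}$ is exactly the expected one, so $\delta_i(\Qc)$ is simply the coefficient of $t^i$ in the numerator $N(t)=g'(t)(1-t)+d\,g(t)$.

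Finally I would read off these coefficients. From $g(t)=\sum_{i=0}^{d-1}\delta_i(\Pc)\,t^{i+1}$ we get $g'(t)=\sum_{i=0}^{d-1}(i+1)\delta_i(\Pc)\,t^i$, and expanding $N(t)=g'(t)-t\,g'(t)+d\,g(t)$ and collecting the coefficient of $t^i$ gives $(i+1)\delta_i(\Pc)$ from the first term together with $-i\,\delta_{i-1}(\Pc)+d\,\delta_{i-1}(\Pc)=(d-i)\delta_{i-1}(\Pc)$ from the remaining two, which is precisely the asserted formula. The only point requiring care is the index bookkeeping at the two ends $i=0$ and $i=d$, and this is handled automatically by the conventions $\delta_{-1}(\Pc)=\delta_d(\Pc)=0$; apart from that, the argument is a routine computation with no real obstacle.
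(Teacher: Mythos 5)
Your proof is correct. Both your argument and the paper's hinge on the same key observation, namely the product formula $i(\Qc,n)=(n+1)\,i(\Pc,n)$; the difference is purely in how the $\delta$-coefficients are then extracted. The paper substitutes this into the alternating binomial-sum formula $\delta_j(\Qc)=\sum_{k=0}^{j}\binom{d+1}{k}(-1)^k(j-k+1)\,i(\Pc,j-k)$ and manipulates binomial coefficients against the corresponding formula for $\delta_j(\Pc)$ (a computation it largely leaves to the reader), whereas you stay at the level of the Ehrhart series and encode the factor $n+1$ as the differentiation identity $\sum_{n\ge 0}(n+1)i(\Pc,n)t^n=\frac{d}{dt}\bigl(t\,\mathrm{Ehr}_\Pc(t)\bigr)$, after which the quotient rule and a coefficient comparison in $N(t)=g'(t)(1-t)+d\,g(t)$ give the formula directly. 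The two computations are equivalent (multiplying by $(1-t)^{d+1}$ is exactly the binomial inversion the paper performs), but your version is more transparent and self-verifying: the degree bound $\deg N\le d$ and the boundary cases $i=0$, $i=d$ fall out of the algebra rather than needing separate index bookkeeping. The one point worth making explicit is that since $N(t)$ is a polynomial of degree at most $d$, the representation $\mathrm{Ehr}_\Qc(t)=N(t)/(1-t)^{d+1}$ is the defining one for the $\delta$-vector of the $d$-dimensional polytope $\Qc$, so its coefficients really are the $\delta_i(\Qc)$; you assert this and it is immediate, so there is no gap.
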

\begin{proof}
We know $i(\Qc,n)=(n+1) \cdot i(\Pc,n)$.
Hence we have
 $$\delta_j(\Qc)=\sum\limits_{k=0}^{j}\binom{d+1}{k}(-1)^k(j-k+1) \cdot i(\Pc,j-k).$$
 Since
  $$\delta_j(\Pc)=\sum\limits_{k=0}^{j}\binom{d}{k}(-1)^k \cdot i(\Pc,j-k),$$
  we obtain
\begin{displaymath}
\begin{aligned}
(j+1)\delta_j(\Pc)+(d-j)\delta_{j-1}(\Pc)
&=\delta_j(\Qc),
\end{aligned}
\end{displaymath}  
  as desired.
\end{proof}

\begin{lemma}[{\cite[Theorem 2.4]{Beck}}]
	\label{delta}
		For $d \geq 2$, let $\Pc \subset \RR^{d-1}$ be an integral convex polytope of 
		dimension $d-1$.
		Set 
		$$\Qc=\conv (\left\{\Pc \times \{0\},(0,0,\ldots,0,1)\right\}) \subset \RR^{d}.$$
       Then we have $\textnormal{Ehr}_\Qc(t)=\textnormal{Ehr}_\Pc(t)/(1-t)$.
\end{lemma}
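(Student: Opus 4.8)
The plan is to compute the Ehrhart polynomial $i(\Qc,n)$ directly by slicing the dilate $n\Qc$ into horizontal layers, and then to pass to generating functions. Observe that $\Qc$ is the pyramid over $\Pc$ with apex $(0,\ldots,0,1)$, so $n\Qc$ is the pyramid over $n\Pc$ with apex $(0,\ldots,0,n)$. The integer points of $n\Qc$ are distributed among the horizontal hyperplanes $\{x_d=k\}$ for $k=0,1,\ldots,n$, and I would count them layer by layer.

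First I would determine the cross-section of $n\Qc$ at height $k$. Writing a point of $n\Qc$ as a convex combination $\tfrac{n-k}{n}(q,0)+\tfrac{k}{n}(0,\ldots,0,n)$ with $q\in n\Pc$ forces the last coordinate to equal $k$ and the first $d-1$ coordinates to equal $\tfrac{n-k}{n}q$. Hence the slice $n\Qc\cap\{x_d=k\}$ is exactly $(n-k)\Pc\times\{k\}$, and its lattice points are in bijection with $(n-k)\Pc\cap\ZZ^{d-1}$. Summing over the layers gives
$$i(\Qc,n)=\sum_{k=0}^{n}i(\Pc,n-k)=\sum_{j=0}^{n}i(\Pc,j),$$
using $i(\Pc,0)=1$ and substituting $j=n-k$.

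Finally I would translate this recursion into the Ehrhart series. Since $i(\Qc,n)$ is the $n$-th partial sum of the sequence $(i(\Pc,j))_{j\ge0}$, the standard identity $\sum_n\bigl(\sum_{j=0}^n b_j\bigr)t^n=\tfrac{1}{1-t}\sum_n b_nt^n$ applied with $b_j=i(\Pc,j)$ yields
$$\textnormal{Ehr}_\Qc(t)=\sum_{n=0}^{\infty}i(\Qc,n)t^n=\frac{1}{1-t}\sum_{n=0}^{\infty}i(\Pc,n)t^n=\frac{\textnormal{Ehr}_\Pc(t)}{1-t},$$
as claimed.

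The only step requiring care is the identification of the slice $n\Qc\cap\{x_d=k\}$ with $(n-k)\Pc\times\{k\}$: I must check both that every point of this slice arises as such a convex combination and, conversely, that the homothety $q\mapsto\tfrac{n-k}{n}q$ carries $n\Pc\cap\ZZ^{d-1}$ onto $(n-k)\Pc\cap\ZZ^{d-1}$ at the level of lattice points. This is a routine convex-geometry verification—the apex is a lattice point and the base lies in the coordinate hyperplane $\{x_d=0\}$, so no integrality is lost—but it is the one place where the pyramidal structure is genuinely used.
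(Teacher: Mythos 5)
Your proof is correct. The paper does not actually prove this lemma---it is quoted as \cite[Theorem 2.4]{Beck}---and your layer-by-layer count is precisely the standard argument behind that cited result: the slice of $n\Qc$ at height $k$ is $(n-k)\Pc\times\{k\}$, hence $i(\Qc,n)=\sum_{j=0}^{n}i(\Pc,j)$, and the partial-sum identity for power series yields $\textnormal{Ehr}_\Qc(t)=\textnormal{Ehr}_\Pc(t)/(1-t)$.

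One caution about your closing paragraph: the claim that the homothety $q\mapsto\tfrac{n-k}{n}q$ carries $n\Pc\cap\ZZ^{d-1}$ onto $(n-k)\Pc\cap\ZZ^{d-1}$ ``at the level of lattice points'' is false as literally stated---a rational homothety does not preserve integrality (for $\Pc=[0,1]$, $n=2$, $k=1$ it sends the lattice point $1$ to $1/2$)---but fortunately your argument never uses it. All you need, and what your second paragraph in fact establishes, is the set-level identity $n\Qc\cap\{x_d=k\}=(n-k)\Pc\times\{k\}$; the lattice points of this slice are then counted by dropping the last coordinate, giving exactly $i(\Pc,n-k)$ because $n-k$ is a nonnegative integer and $\Pc$ is integral. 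No comparison of lattice points across different dilates is involved, so you should simply delete that purported verification rather than attempt it.
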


Now, we prove Propsition \ref{maindelta}.
\begin{proof}[Proof of Proposition \ref{maindelta}]
We set
	$\Qc_1=\conv ( \left\{\Pc \times \left\{ 0 \right\},(0,\ldots ,0,1)\right\})$
	and
	$\Qc_2=\Pc \times [-1,0]$.
	Then 
	$\Qc_1 \cup \Qc_2=\Gamma(\Pc)$
	and
	$\Qc_1 \cap \Qc_2=\Pc \times \left\{0\right\}$.
	Hence we have
	\begin{displaymath}
	\begin{aligned}
	\text{Ehr}_{\Gamma(\Pc)}(t) 
							&=\text{Ehr}_{\Qc_1}(t)+\text{Ehr}_{\Qc_2}(t)-\text{Ehr}_\Pc(t).
	\end{aligned}
	\end{displaymath}
	By Lemma \ref{delta},  $(1-t) \cdot \text{Ehr}_{\Qc_1}(t)=\text{Ehr}_\Pc(t)$.
	Hence we have 
	\begin{displaymath}
	\begin{aligned}
	\text{Ehr}_{\Gamma(\Pc)}(t)&=t \cdot \text{Ehr}_{\Qc_1}(t)+\text{Ehr}_{\Qc_2}(t).
	\end{aligned}
	\end{displaymath}
	Let $\delta(\Qc_1)=(\delta_0(\Qc_1),\delta_1(\Qc_1),\dots,\delta_{d}(\Qc_1))$ and
	$\delta(\Qc_2)=(\delta_0(\Qc_2),\delta_1(\Qc_2),\dots,\delta_{d}(\Qc_2))$ be the $\delta$-vectors of $\Qc_1$ and $\Qc_2$.
	By Lemma \ref{prismdelta} and Lemma \ref{delta}, we have 
	\begin{displaymath}
	 \delta_i(\Qc_1)=\left\{
	\begin{aligned}
	&\delta_i(\Pc) &i&=0,\ldots,d-1,\\
	&0&i&=d,
	\end{aligned}
	\right.
	\end{displaymath}
	and for $i=0,\ldots,d$, we have
	$$\delta_i(\Qc_2)=(i+1)\delta_i(\Pc)+(d-i)\delta_{i-1}(\Pc),$$
	where $\delta_{-1}(\Pc)=\delta_{d}(\Pc)=1$.
	Hence for $i=0,\ldots,d$, 
	\begin{displaymath}
	\begin{aligned}
	\delta_i(\Gamma(\Pc))&=\delta_{i-1}(\Qc_1)+\delta_i(\Qc_2)\\
						&=\delta_{i-1}(\Pc)+(i+1)\delta_i(\Pc)+(d-i)\delta_{i-1}(\Pc)\\
						&=(i+1)\delta_i(\Pc)+(d-i+1)\delta_{i-1}(\Pc),
	\end{aligned}
	\end{displaymath}
	as desired.
\end{proof}

We give a higher-dimensional construction of a reflexive polytope whose $\delta$-vector equals the $\delta$-vector of the dual polytope.
By following theorem, we obtain the construction.

\begin{theorem}
	\label{main}
For $d \geq 2$, let $\Pc$ and $\Qc \subset \RR^{d-1}$ be integral convex polytopes of 
dimension $d-1$ such that each of them has at least one interior integer point.
Then we have the following properties:
\begin{itemize}
	\item[(a)] $\Pc \cong \Qc$ if and only if $\Gamma(\Pc) \cong \Gamma(\Qc)$;
\item[(b)] $\delta(\Pc)=\delta(\Qc)$ if and only if $\delta(\Gamma(\Pc))=\delta(\Gamma(\Qc))$,
\end{itemize}		
where $\delta(\Pc)$, $\delta(\Qc)$, $\delta(\Gamma(\Pc))$ and $\delta(\Gamma(\Qc))$ are the $\delta$-vectors of $\Pc$, $\Qc$, $\Gamma(\Pc)$ and $\Gamma(\Qc)$.
\end{theorem}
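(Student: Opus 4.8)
The statement has two parts, and part (b) is the one that does essentially all the work while part (a) is more combinatorial. I would treat (b) first, since Proposition~\ref{maindelta} gives me an explicit linear recipe turning $\delta(\Pc)$ into $\delta(\Gamma(\Pc))$, and then deduce (a) separately.

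For part (b), the direction $\delta(\Pc)=\delta(\Qc) \Rightarrow \delta(\Gamma(\Pc))=\delta(\Gamma(\Qc))$ is immediate: by Proposition~\ref{maindelta} each entry $\delta_i(\Gamma(\Pc))$ is a fixed linear combination of $\delta_i(\Pc)$ and $\delta_{i-1}(\Pc)$, so equal input $\delta$-vectors force equal output $\delta$-vectors. The substance is the converse. The plan is to show that the linear map $\delta(\Pc) \mapsto \delta(\Gamma(\Pc))$ is \emph{injective} on the relevant vectors. Concretely, suppose $\delta(\Gamma(\Pc))=\delta(\Gamma(\Qc))$ and write $p_i=\delta_i(\Pc)$, $q_i=\delta_i(\Qc)$ for $0\le i\le d-1$, with the convention $p_{-1}=p_d=q_{-1}=q_d=0$. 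Setting $e_i=p_i-q_i$, the hypothesis together with Proposition~\ref{maindelta} gives, for every $i=0,1,\ldots,d$,
\begin{displaymath}
(i+1)e_i+(d-i+1)e_{i-1}=0.
\end{displaymath}
I would now run this recurrence. Taking $i=0$ yields $e_0=0$ (since $e_{-1}=0$), and then inductively, assuming $e_{i-1}=0$, the relation forces $(i+1)e_i=0$, hence $e_i=0$ for all $0\le i\le d-1$. Thus $\delta(\Pc)=\delta(\Qc)$. The key observation making this clean is that the coefficient $(i+1)$ of $e_i$ is never zero in the range, so the triangular system has only the trivial solution.

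For part (a), the forward direction is general nonsense: a unimodular equivalence $\Pc\cong\Qc$ in $\RR^{d-1}$ extends to one between $\Gamma(\Pc)$ and $\Gamma(\Qc)$ by acting as the given unimodular map on the first $d-1$ coordinates and as the identity on the last coordinate, since $\Gamma$ is built from $\Pc$ by a coordinate-wise construction ($\Pc\times[-1,0]$ together with the apex $(0,\ldots,0,1)$), which is preserved by such a block-diagonal unimodular transformation. The converse is the main obstacle: from $\Gamma(\Pc)\cong\Gamma(\Qc)$ I must recover an equivalence downstairs. Here the hypothesis that each of $\Pc,\Qc$ has an interior integer point is essential, because it guarantees (by the reflexive-type discussion preceding the theorem, after translating the interior point to the origin) that the two special vertices $(0,\ldots,0,\pm 1)$ play a distinguished role and the slab $\Pc\times[-1,0]$ is intrinsically recoverable. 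The strategy is to argue that any unimodular equivalence carrying $\Gamma(\Pc)$ to $\Gamma(\Qc)$ must respect this structure: it has to send the apex to the apex and the base facet $\Pc\times\{-1\}$ (a facet of $\Gamma(\Pc)$, as identified in the proof of Proposition~\ref{refl}) to the corresponding facet of $\Gamma(\Qc)$, because these are the combinatorially distinguished faces. Restricting the unimodular map to the hyperplane containing that base facet then yields the desired unimodular equivalence $\Pc\cong\Qc$. The delicate point I would have to verify carefully is that the last coordinate is preserved up to the expected sign/shear, i.e.\ that the map cannot mix the vertical direction with the base in a way that destroys the product structure; this is where I would invoke the explicit facet description from Proposition~\ref{refl} and the interior-point hypothesis to pin down the image of the apex.
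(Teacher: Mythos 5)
Your treatment of part (b) is correct and coincides with the paper's: both arguments feed the identity of Proposition \ref{maindelta} into the difference vector $e_i=\delta_i(\Pc)-\delta_i(\Qc)$ and solve the resulting triangular recurrence $(i+1)e_i+(d-i+1)e_{i-1}=0$ by induction, the nonvanishing of the coefficient $i+1$ being the whole point. No issue there.

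Part (a), converse direction, has a genuine gap. Your entire argument rests on the assertion that the apex $(0,\ldots,0,1)$ and the base facet $\Pc\times\{-1\}$ are ``combinatorially distinguished'' and must therefore be preserved by any unimodular equivalence $\Gamma(\Pc)\cong\Gamma(\Qc)$. As a combinatorial statement this is false: for $\Pc=[-1,1]\subset\RR^{1}$ the polytope $\Gamma(\Pc)$ is a pentagon whose face lattice admits the full dihedral symmetry of a $5$-cycle, so no vertex and no facet is singled out combinatorially. What actually pins down the apex is the \emph{lattice} structure, and that verification --- which you explicitly defer as ``the delicate point'' --- is precisely the content of the paper's proof: writing the equivalence as $f_U$ with $U=(u_{ij})$ unimodular, one uses that $(0,\ldots,0,\pm 1)\in\Gamma(\Pc)$ while every point of $\Gamma(\Qc)$ has last coordinate in $[-1,1]$ to force $u_{dd}\in\{-1,0,1\}$, then rules out $u_{dd}=0$ (the unique vertex of $\Gamma(\Qc)$ at height $1$ would have no preimage among the vertices) and $u_{dd}=-1$ (the apex would map into the relative interior of the bottom facet, hence to a non-vertex), concluding $u_{dd}=1$, last row $(0,\ldots,0,1)$, and hence a block form whose upper-left $(d-1)\times(d-1)$ block $U'$ gives $\Pc\cong\Qc$. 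Without some version of this height analysis your proof of the converse is incomplete. A smaller remark on the forward direction: your block-diagonal extension only handles $\Qc=f_V(\Pc)$ with no translation; since $\Gamma$ is not translation-equivariant (the apex sits over the origin), one must first normalize both polytopes so that the distinguished interior lattice point is the origin --- this is the role of the interior-point hypothesis, and it is the same normalization the paper invokes at the start of its proof.
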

\begin{remark}
	For $d \geq 2$, let $\Pc \subset \RR^{d-1}$ be a reflexive polytope of 
	dimension $d-1$.
	Then by Proposition \ref{refl} and Theorem \ref{main}, we have the following properties:
	\begin{itemize}
		\item[(a)] $\Pc \cong \Pc^\vee$ if and only if $\Gamma(\Pc) \cong \Gamma(\Pc)^\vee$;
		\item[(b)] $\delta(\Pc)=\delta(\Pc^\vee)$ if and only if $\delta(\Gamma(\Pc))=\delta(\Gamma(\Pc)^\vee)$,
	\end{itemize}		
	where $\delta(\Pc)$, $\delta(\Pc^\vee)$, $\delta(\Gamma(\Pc))$ and $\delta(\Gamma(\Pc)^\vee)$ are the $\delta$-vectors of $\Pc$, $\Qc$, $\Gamma(\Pc)$ and $\Gamma(\Pc)^\vee$.
\end{remark}
Now, we prove Theorem \ref{main}.
\begin{proof}[Proof of Theorem \ref{main}]
	(a) Clearly,  if $\Pc \cong \Qc$, then $\Gamma(\Pc) \cong \Gamma(\Qc)$.
	Conversely, suppose that  $\Gamma(\Pc) \cong \Gamma(\Qc)$.
	We can assume that  the origin of $\RR^{d-1}$ belongs to the interior of $\Pc$ and the interior of $\Qc$, and there exists a unimodular matrix $U \in \ZZ^{d \times d}$
	such that $\Gamma(\Qc)=f_U(\Gamma(\Pc))$,
	where $f_U$ is the linear transformation in $\RR^{d}$ defined by $U$.
	Let $v_1,\ldots,v_s$ be the vertices of $\Pc$ and $w_1,\ldots,w_s$ be the vertices of $\Qc$,
	and let
	\begin{displaymath}
	U=\left(  
	\begin{array}{cccc}
		u_{11}  & u_{12}  & \cdots  & u_{1d} \\
		u_{21} & u_{22} & \cdots  & u_{2d} \\
		\vdots & \vdots  &  \ & \vdots \\
		u_{d1} & u_{d2} & \cdots & u_{dd}
	\end{array}
	\right ).
\end{displaymath}
	Since$(0,\ldots,0,1)$ and $(0,\ldots,0,-1)$ belong to $\Gamma(\Pc)$
    and since for each $(x_1,\ldots,x_d) \in \Gamma(\Qc)$, we have $-1 \leq x_d \leq 1$,
	we know $-1 \leq u_{dd} \leq 1$.
	If $u_{dd}=0$, then  $f_U((v_i,0))$ and $f_U((v_i,-1))$ have a common $d$-th coordinate for $1 \leq i \leq s$.
	Since $\Gamma(\Qc)$ has just one vertex whose $d$-th coordinate equals $1$
	and since the $d$-th coordinate of $f_U((0,\ldots,0,1))$ equals $0$,
	there does not exist a vertex $v$ of $\Gamma(\Pc)$
	such that $f_U(v)=(0,\ldots,0,1)$,
	 a contradiction.
	If $v_{dd}=-1$, then $f_U((0,\ldots,0,1))=(0,\ldots,0,-1)$.
	However, $(0,\ldots,0,1)$ is a vertex of $\Gamma(\Pc)$ but $(0,\ldots,0,-1)$ is not a vertex of $\Gamma(\Qc)$, a contradiction.
	Hence $u_{dd}=1$.
	Since $f_U((0,\ldots,0,1))=(0,\ldots,0,1)$, we have
\begin{displaymath}
	U=\left(  
	\begin{array}{cccc}
	\  & \  & \  & \ast \\
	\ & \hugesymbol{U'}  & \  & \vdots \\
	\ & \  &  \ & \ast \\
	0 & \cdots & 0 & 1
	\end{array}
	\right ),
	\end{displaymath}
	where $U' \in \ZZ^{(d-1) \times (d-1)}$ is a unimodular matrix.  
    Then for each $v_i$ there exists a vertex $w_{j_i}$ of $\Qc$ such that $f_U((v_i,0))=(w_{j_i},-1)$
    and $f_U((v_i,1))=(w_{j_i},0)$.
    Hence for each $v_i$ we have $f_{U'}(v_i)=w_{j_i}$,
    where $f_{U'}$ is the linear transformation in $\RR^{d-1}$ defined by $U'$.
	Therefore, $\Pc \cong \Qc$.
	
	(b)	If $\delta(\Pc)=\delta(\Qc)$, by Proposition \ref{maindelta}, we have $\delta(\Gamma(\Pc))=\delta(\Gamma(\Qc))$.
	Suppose that $\delta(\Gamma(\Pc))=\delta(\Gamma(\Qc))$.
	We set
	$$\delta(\Pc) =(\delta_0(\Pc),\delta_1(\Pc),\dots,\delta_{d-1}(\Pc)),$$
	$$\delta(\Qc)=(\delta_0(\Qc),\delta_1(\Qc),\dots,\delta_{d-1}(\Qc)).$$
	By Proposition \ref{maindelta}, for $i=1,\ldots,d-1$, we have 
	$$(i+1)(\delta_i(\Pc)-\delta_i(\Qc))+(d-i+1)(\delta_{i-1}(\Pc)-\delta_{i-1}(\Qc))=0$$
	Since $\delta_0(\Pc)=\delta_0(\Qc)$, for $i=0,\ldots,d-1$, we have $\delta_i(\Pc)=\delta_i(\Qc)$.
	Hence $\delta(\Pc)=\delta(\Qc)$.
\end{proof}
 
 We let $\Pc \subset \RR^2$ be a reflexive polytope of dimension $2$. 
Then the $\delta$-vector of $\Pc$ equals the $\delta$-vector of $\Pc^\vee$ if and only if $\Pc \cong \Pc^\vee$.
However, there exists a reflexive polytope of dimension 3 whose $\delta$-vector equals the $\delta$-vector of the dual polytope such that it and the dual polytope are not unimodularly equivalent.
We give an example of such a reflexive polytope.

\begin{example}[{\cite[Example 35.11]{Hibi}}]
	\label{eq_nue}
	Let $\Pc \subset \RR^3$ be the reflexive polytope with the vertices $(-1,0,1)$, $(-1,0,-1)$, $(1,1,1)$, $(1,1,-1)$, $(0,-1,1)$ and $(0,-1,-1)$.
	Then $\Pc$ has 5 facets.
	Hence $\Pc^\vee$ has 5 vertices $(0,0,1)$, $(0,0,-1)$, $(2,-1,0)$, $(-1,2,0)$ and $(-1,-1,0)$.
	Therefore $\Pc$ and $\Pc^\vee$ are not unimodularly equivalent.
	However, $\delta(\Pc)=\delta(\Pc^\vee)=(1,8,8,1)$.
\end{example}

By using Theorem \ref{main} and Example \ref{eq_nue}, we obtain the following corollary.

\begin{corollary}
	For each $d \geq 3$, there exists a reflexive polytope of dimension $d$ such that $\delta(\Pc)=\delta(\Pc^\vee)$ but $\Pc$ and $\Pc^\vee$  are not unimodularly equivalent.
\end{corollary}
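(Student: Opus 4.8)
The plan is to argue by induction on $d$, using the reflexive $3$-polytope of Example \ref{eq_nue} as the base case and the operation $\Gamma(\cdot)$ to climb one dimension at a time. The engine driving the induction is the Remark following Theorem \ref{main}: for a reflexive polytope $\Pc$, it records that the property $\delta(\Pc)=\delta(\Pc^\vee)$ and the property $\Pc \not\cong \Pc^\vee$ are each transported (in the direction we need) from $\Pc$ to $\Gamma(\Pc)$. Since Proposition \ref{refl} guarantees that $\Gamma(\Pc)$ is again reflexive, the construction can be iterated indefinitely, producing examples in every dimension $d \geq 3$.

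For the base case $d=3$, I would take $\Pc_3$ to be the reflexive polytope of Example \ref{eq_nue}. It satisfies $\delta(\Pc_3)=\delta(\Pc_3^\vee)=(1,8,8,1)$, while $\Pc_3$ has six vertices and $\Pc_3^\vee$ has five; since unimodular equivalence preserves the number of vertices, $\Pc_3 \not\cong \Pc_3^\vee$. For the inductive step, suppose that for some $d \geq 4$ we have a reflexive polytope $\Pc_{d-1} \subset \RR^{d-1}$ of dimension $d-1$ with $\delta(\Pc_{d-1})=\delta(\Pc_{d-1}^\vee)$ and $\Pc_{d-1} \not\cong \Pc_{d-1}^\vee$. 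I set $\Pc_d := \Gamma(\Pc_{d-1})$. By Proposition \ref{refl}, $\Pc_d$ is reflexive of dimension $d$ and again contains the origin in its interior, so the hypotheses are in place to continue the induction. Applying part (b) of the Remark to $\Pc_{d-1}$, the equality $\delta(\Pc_{d-1})=\delta(\Pc_{d-1}^\vee)$ gives $\delta(\Pc_d)=\delta(\Pc_d^\vee)$; applying part (a), the relation $\Pc_{d-1}\not\cong\Pc_{d-1}^\vee$ forces $\Pc_d = \Gamma(\Pc_{d-1}) \not\cong \Gamma(\Pc_{d-1})^\vee = \Pc_d^\vee$. Thus $\Pc_d$ carries both required properties, completing the induction.

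Because the two inductive invariants are delivered to us directly by the Remark, I do not expect any genuine analytic difficulty; the only points demanding care are bookkeeping ones. I must confirm that the hypotheses of Theorem \ref{main} and of the Remark hold at each stage — in particular that $\Pc_{d-1}$ and its dual each contain an interior integer point, which is automatic since a reflexive polytope always has the origin in its interior — and that $\Gamma(\Pc_{d-1})$ is itself reflexive so that it is a legitimate input for the next application of $\Gamma$, which is exactly Proposition \ref{refl}. The one subtlety worth stating explicitly is the identification $\Gamma(\Pc^\vee) \cong \Gamma(\Pc)^\vee$ from Proposition \ref{refl}, which is what allows the ``dual'' in the Remark to be read as the genuine dual $\Pc_d^\vee$ rather than merely as $\Gamma(\Pc_{d-1}^\vee)$; once this is invoked correctly, the argument closes.
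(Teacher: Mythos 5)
Your proof is correct and follows exactly the route the paper intends: the paper states the corollary with only the remark ``By using Theorem \ref{main} and Example \ref{eq_nue}, we obtain the following corollary,'' and your induction via $\Gamma$, anchored at the $3$-dimensional example and using Proposition \ref{refl} to identify $\Gamma(\Pc^\vee)$ with $\Gamma(\Pc)^\vee$, is precisely the argument being compressed there. Your explicit attention to the vertex-count obstruction in the base case and to the identification $\Gamma(\Pc^\vee)\cong\Gamma(\Pc)^\vee$ fills in the details the paper leaves implicit.
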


\section{a new example of a reflexive simplex}
For $d \geq 2$, we  let $\Pc$ be a reflexive polytope of dimension $d$ and $\Qc$ the reflexive polytope of dimension $d+1$ satisfying the assumption of Theorem \ref{main}.
Clearly, $\Qc$ is not simplex.
We consider a reflexive simplex whose $\delta$-vector equals the $\delta$-vector of the dual polytope.

First, we give  an elementary number-theoretic notion.
\begin{definition}
	The well-known recursive sequence (\cite[A000058]{Slo}) of pairwise coprime natural numbers $b_0:=2$, $b_n:=1+b_0\cdots b_{n-1} (n \geq 1)$ is called \textit{Sylvester Sequence}.
	It starts as $b_0=2$, $b_1=3$, $b_2=7$, $b_3=43$, $b_4=1807$.
\end{definition}

For $d \geq 2$, we  let $\Pc$ be a reflexive simplex of dimension $d$.
It is known  
$$(d+1)^{d+1} \leq \text{Vol}(\Pc)\text{Vol}(\Pc^\vee)  \leq (b_d-1)^2,$$
and if $\text{Vol}(\Pc)=b_d-1$, then $\Pc  \cong \Pc^\vee$ (\cite[Theorem C]{Nill}).
Hence if $\Pc \cong \Pc^\vee$, we have $\text{Vol}(\Pc) \leq b_d-1$.

In this section for $d \geq 3$ we give a new example of a reflexive simplex  of dimension $d$ whose $\delta$-vector equals the $\delta$-vector of the dual polytope.
In particular the simplex and the dual polytope are unimodularly equivalent and the volume is less than $b_d-1$.
In fact,
	\begin{theorem}
	\label{sim}
	For $d \geq 3$, let $\Pc$ be the $d$-dimensional simplex whose vertices $v_i \in \RR^d,i=0,1,\ldots,d$, are of the form:
	\begin{displaymath}
	v_i=\left\{        
	\begin{aligned} 
	&-3e_1-2\sum^{d}_{i=2}e_i  &i&=0,\\
	&e_1 &i&=1,\\
	&e_1+2e_i &i&=2,3,\\
	&e_1+2b_{i-4}e_i &i&=4,\ldots ,d,
	\end{aligned}
	\right.
	\end{displaymath}
	where $e_i$ denotes the $i$-th unit vector.
	Then $\Pc$ is reflexive and we have $\Pc \cong \Pc^\vee$, in particular, $\textnormal{Vol}(\Pc)<b_d-1$.
	
\end{theorem}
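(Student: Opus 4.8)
The plan is to exploit the weight relation of the simplex together with the arithmetic of the Sylvester sequence. Throughout write $P := \prod_{j=0}^{d-4} b_j$ (so $P = 1$ when $d = 3$), and recall the telescoping identity $\sum_{j=0}^{n} 1/b_j = 1 - 1/(b_0 \cdots b_n)$, immediate from $b_{n+1} - 1 = b_0 \cdots b_n$. First I would determine the unique (up to scaling) relation $\sum_{i=0}^{d} q_i v_i = 0$ among the vertices. Reading off the $e_2, \ldots, e_d$ coordinates forces $q_i = 2 q_0 / c_i$, where $c_i$ is the nonzero coordinate of $v_i$, and the $e_1$-coordinate together with the telescoping identity yields the primitive weight vector $q = (P, 1, P, P, P/b_0, \ldots, P/b_{d-4})$, with $\sum_i q_i = 4P$. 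This is the bookkeeping device that organizes everything below.

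Next, reflexivity. By Lemma \ref{facet} the vertices of $\Pc^\vee$ are exactly the facet normals $a_j$, defined by $\langle a_j, v_i\rangle = 1$ for all $i \neq j$. Solving these systems one facet at a time gives $a_0 = e_1$, $a_k = e_1 - 2 e_k$ for $k = 2, \ldots, d$, and the single exceptional normal $a_1 = (1-4P)e_1 + 2P e_2 + 2P e_3 + \sum_{k=4}^d (2P/b_{k-4}) e_k$; the coefficient $1 - 4P$ again falls out of the telescoping identity. Each $a_j$ is integral because $b_{k-4} \mid P$, so $\Pc^\vee$ is a lattice polytope. Since $0 = \tfrac{1}{4P}\sum_i q_i v_i$ lies in the interior and every facet lies on a hyperplane $\langle a_j, x \rangle = 1$ with $a_j$ integral, the standard argument (an interior lattice point $p$ has $\langle a_j, p\rangle \le 0$ for all $j$, while $\sum_j q_j a_j = 0$ with $q_j > 0$ then forces $p = 0$) shows $0$ is the unique interior lattice point. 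Hence $\Pc$ is reflexive.

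For $\Pc \cong \Pc^\vee$ I would exhibit an explicit unimodular map. Matching weights (the value $1$ occurs only at index $1$, and the distinct values $P/b_{k-4}$ pin down the rest) suggests the correspondence $v_i \mapsto a_i$, so I define the linear map $T$ by $T v_i = a_i$; concretely $T e_1 = a_1$, $Te_2 = \tfrac12(a_2-a_1)$, $Te_3 = \tfrac12(a_3-a_1)$, and $T e_k = \tfrac{1}{2b_{k-4}}(a_k - a_1)$ for $k \ge 4$. The condition $T v_0 = a_0$ then holds automatically, since the weight vector of $\Pc^\vee$ also equals $q$. The crux, and the main obstacle, is that $T$ has integer entries: the entries of $T e_k$ for $k \ge 4$ involve $P/(b_{k-4} b_{m-4})$, integral by pairwise coprimality, and the diagonal term $(b_{k-4} + P)/b_{k-4}^{2}$, whose integrality is exactly the congruence $b_{k-4} \mid 1 + \prod_{m \neq k-4} b_m$. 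This is where the Sylvester recurrence does the real work: modulo $b_j$ one has $b_0 \cdots b_{j-1} \equiv -1$ and $b_m \equiv 1$ for every $m > j$, so $\prod_{m \neq j} b_m \equiv -1$ and the sum is $\equiv 0$. Granting integrality, a block-determinant computation gives $\det[a_1, \ldots, a_d] = (-1)^d 2^{d-1} P$ while $\det[v_1, \ldots, v_d] = 2^{d-1} P$, whence $\det T = (-1)^d = \pm 1$; thus $T$ is unimodular and carries the vertex set of $\Pc$ bijectively onto that of $\Pc^\vee$, giving $\Pc \cong \Pc^\vee$.

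Finally, the volume bound is a short computation. The maximal minors satisfy $\det(v_0, \ldots, \widehat{v_j}, \ldots, v_d) = \pm c\, q_j$ with $|c| = \det[v_1, \ldots, v_d]/q_0 = 2^{d-1}$, so summing the subsimplices over the origin gives $\text{Vol}(\Pc) = |c| \sum_i q_i = 2^{d+1} P$. On the other hand $b_d - 1 = b_0 \cdots b_{d-1} = P\, b_{d-3} b_{d-2} b_{d-1}$, so the desired inequality reduces to $2^{d+1} < b_{d-3} b_{d-2} b_{d-1}$, which holds for $d = 3$ ($16 < 42$) and thereafter because the Sylvester numbers grow doubly exponentially. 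I expect the integrality of $T$ via the Sylvester congruence to be the only genuinely delicate point; the remaining steps are linear algebra and the telescoping identity.
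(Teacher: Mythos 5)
Your proof is correct and follows the same overall strategy as the paper: compute the facet normals explicitly (the exceptional normal $a_1$ coming out of the telescoping identity for the Sylvester sequence), conclude reflexivity from Lemma \ref{facet}, exhibit an explicit unimodular map between $\Pc$ and $\Pc^\vee$, and bound $\textnormal{Vol}(\Pc)=2^{d+1}b_0\cdots b_{d-4}$ against $b_d-1=b_0\cdots b_{d-1}$. The one substantive divergence is the choice of unimodular map. The paper sends $w_0\mapsto v_2$, $w_2\mapsto v_0$ and fixes the other indices, realized by a sparse matrix $U$ whose entries are $1$, $2$ and $-b_j$, so integrality is immediate and only $\det U=\pm 1$ needs the identity $\sum_{j=0}^{n}1/b_j=1-1/(b_0\cdots b_n)$. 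Your map $Tv_i=a_i$ has large entries whose integrality is the genuine issue, and you correctly isolate it as the congruence $b_j\mid 1+\prod_{m\neq j}b_m$, which does hold (modulo $b_j$ one has $b_0\cdots b_{j-1}\equiv -1$ and $b_m\equiv 1$ for $m>j$); the determinant bookkeeping $\det T=(-1)^d 2^{d-1}P/(2^{d-1}P)$ then checks out. Your weight vector $q=(P,1,P,P,P/b_0,\dots,P/b_{d-4})$ is an organizing device the paper does not use, and it pays off twice: it gives the volume as $2^{d-1}\sum_i q_i=2^{d+1}P$ without computing $a_1$ first, and your explicit argument that the origin is the unique interior lattice point (via $\sum_j q_ja_j=0$ with $q_j>0$) addresses a point the paper leaves implicit. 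Both routes are complete; the paper's matrix makes unimodularity cheaper to verify, while yours makes the vertex correspondence more transparent at the cost of a divisibility argument.
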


In order to prove Theorem \ref{sim}, we use the following lemma. 
\begin{lemma} [{\cite[A000058]{Slo}}]
	\label{b_frac}
	For each $n \geq 0$
	$$\cfrac{1}{b_0}+\cfrac{1}{b_1}+\cdots +\cfrac{1}{b_n}=1-\cfrac{1}{b_0\cdots b_n}.$$
\end{lemma}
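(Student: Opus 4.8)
The plan is to prove the identity by induction on $n$, using the recursion $b_{n+1}=1+b_0\cdots b_n$ that defines the Sylvester sequence. Write $P_n=b_0\cdots b_n$, so that the claim reads $\sum_{k=0}^{n}1/b_k=1-1/P_n$, while the recursion becomes simply $b_{n+1}=P_n+1$. For the base case $n=0$, both sides equal $1/2$: indeed $1/b_0=1/2$ and $1-1/b_0=1-1/2=1/2$.

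For the inductive step, I would assume $\sum_{k=0}^{n}1/b_k=1-1/P_n$, add $1/b_{n+1}$ to both sides, and substitute $b_{n+1}=P_n+1$. It then suffices to verify
$$1-\frac{1}{P_n}+\frac{1}{b_{n+1}}=1-\frac{1}{P_n b_{n+1}}=1-\frac{1}{P_{n+1}}.$$
Clearing denominators, this reduces to the single relation $P_n-b_{n+1}=-1$, which is precisely the defining recursion $b_{n+1}=P_n+1$. Hence the identity holds for $n+1$, completing the induction.

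The computation is entirely routine, and there is no genuine obstacle. The one point worth isolating is conceptual rather than technical: the defining recurrence $b_{n+1}=1+b_0\cdots b_n$ is exactly the relation needed to close the induction, so the result is in effect a reformulation of the definition. If a non-inductive argument is preferred, I would instead use a telescoping sum. Since $b_{n+1}-1=b_0\cdots b_n=(b_n-1)b_n$, one checks the partial-fraction identity
$$\frac{1}{b_k}=\frac{1}{b_k-1}-\frac{1}{b_{k+1}-1},$$
and summing over $k=0,\ldots,n$ collapses the sum to $\frac{1}{b_0-1}-\frac{1}{b_{n+1}-1}=1-\frac{1}{b_0\cdots b_n}$, using $b_0-1=1$ and $b_{n+1}-1=b_0\cdots b_n$. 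Either route yields the claimed formula.
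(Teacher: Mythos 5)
Your proof is correct, and both variants (the induction and the telescoping sum) are complete and accurate; the key identity $b_{n+1}-1=(b_n-1)b_n$ underlying the partial-fraction version is valid for all $k\geq 0$ since $b_0-1=1$. Note that the paper itself offers no proof of this lemma at all --- it is simply cited as a known property of the Sylvester sequence from the OEIS entry A000058 --- so your argument supplies a self-contained justification where the paper relies on a reference; the inductive route you take is the standard one and is exactly what the citation implicitly appeals to.
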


Now, we prove Thorem \ref{sim}.
\begin{proof}[Proof of Theorem \ref{sim}]
	First, we show that $\Pc$ is reflexive.
	Let $\Fc_0,\ldots ,\Fc_d$ be facets of $\Pc$, which are of the form:
	$$\Fc_i=\conv(\left\{v_0,\ldots,v_{i-1},v_{i+1},\ldots,v_d \right\}) \ \ \ \ \ 0 \leq i \leq d,$$
	and for $0 \leq i \leq d$, let $\Hc_i$ be a hyperplane  satisfying $\Fc_i=\Pc \cap \Hc_i$.
	Then  
	\begin{displaymath}
	\Hc_i=\left\{        
	\begin{aligned} 
	&\left\{(x_1,\ldots ,x_d) \in \RR ^d  | x_1=1 \right\} &i&=0,\\
	&\left\{(x_1,\ldots ,x_d) \in \RR ^d  | x_1-2x_i=1 \right\} &i&=2,\ldots ,d.
	\end{aligned}
	\right.
	\end{displaymath}
	Also $\Hc_1=\left\{(x_1,\ldots ,x_d) \in \RR ^d  |  \sum^{d}_{i=1}a_ix_i=1 \right\}$, where
	\begin{displaymath}
	a_i=\left\{        
	\begin{aligned} 
	&-(4b_0\cdots b_{d-4}-1)  &i&=1,\\
	&\cfrac{4b_0\cdots b_{d-4}}{2} &i&=2,3,\\
	&\cfrac{4b_0\cdots  b_{d-4}}{2b_{i-4}} &i&=4,\ldots d.
	\end{aligned}
	\right.
	\end{displaymath}
	In fact, $v_0\in \Hc_1$ since
	\begin{displaymath}
	\begin{aligned}
	&3(4b_0 \cdots b_{d-4}-1)-4b_0 \cdots b_{d-4}-4b_0 \cdots b_{d-4}-\cfrac{4b_0 \cdots b_{d-4}}{b_0}-\cdots -\cfrac{4b_0 \cdots b_{d-4}}{b_{d-4}}\\
	=&-3+4b_0 \cdots b_{d-4}(1-(\cfrac{1}{b_0}+\cfrac{1}{b_1}+\cdots +\cfrac{1}{b_{d-4}}))\\
	=&-3+4b_0\cdots b_{d-4}\cfrac{1}{b_0 \cdots b_{d-4}}\ \ \ \  ({\rm Lemma} \ \ref{b_frac})\\
	=&1.
	\end{aligned}
	\end{displaymath}
	Hence since $a_i \in \ZZ \ (1 \leq i \leq d)$, by Lemma \ref{facet}, $\Pc$ is reflexive.
	
	Next, we show that $\Pc \cong \Pc^\vee$.
	By Lemma \ref{facet}, we obtain that $w_0,\ldots,w_d$ are the vertices of $\Pc^\vee$, where
	\begin{displaymath}
	w_i =\left\{        
	\begin{aligned} 
	&e_1 &i&=0,\\
	&(a_1,\ldots ,a_d) &i&=1,\\
	&e_1-2e_i &i&=2,\ldots ,d
	\end{aligned}
	\right.
	\end{displaymath}
	We set a $d \times d$ matrix 
	\begin{displaymath}
	U=\left(  
	\begin{array}{cccccc}
	1 & 2 & \  & \   & \  & \ \\
	2 & 2   & 1  & 1 &  \cdots & 1 \\
	\  & 1 & -1 &  \  & \  & \ \\
	\  & 1 & \  & -b_0 &  \ & \ \\
	\  & \vdots & \   & \  & \ddots & \  \\
	\  & 1 & \  & \   & \  & -b_{d-4}
	\end{array}
	\right)
	\in \ZZ^{d \times d},
	\end{displaymath}
	where all other terms are zero.
	Then by Lemma \ref{b_frac}, we have
	\begin{displaymath}
	\begin{aligned}
	\text{det}(U)&=
	\text{det}\left(
	\begin{array}{ccc}
	1 & 2 & 0  \\
	2 & 2  & 1 \\
	0 & 1 & -1  
	\end{array}
	\right)
	(-1)^{d-3}b_0 \cdots b_{d-4}+(-1)^{d-2} \sum ^{d-4}_{i=0}\cfrac{b_0 \cdots b_{d-4}}{b_i}\\
	&=(-1)^{d-3}b_0\cdots b_{d-4}+(-1)^{d-2} \sum ^{d-4}_{i=0}\cfrac{b_0 \cdots b_{d-4}}{b_i}\\
	&=(-1)^{d-3}b_0\cdots b_{d-4}(1-\sum ^{d-4}_{i=0}\cfrac{1}{b_i})\\
	&=(-1)^{d-3}.
	\end{aligned}
	\end{displaymath}
	Hence $U$ is a unimodular matrix.
	Let $f_U$ be the linear transformation in $\RR^d$ defined by $U$.
	Then 
	\begin{displaymath}
	f_U(w_i)=\left\{        
	\begin{aligned} 
	&v_2 &i&=0,\\
	&v_1 &i&=1,\\
	&v_0 &i&=2,\\
	&v_i &i&=3,\ldots ,d.
	\end{aligned}
	\right.
	\end{displaymath}
	Hence $\Pc=f_U(\Pc^\vee)$. 
	Therefore we have $\Pc \cong\Pc^\vee$.
	
	Finally, we show that $\textnormal{Vol}(\Pc)<b_d-1$.
	If $d=3$, then  $\textnormal{Vol}(\Pc)=16<42=b_3-1$.
	We assume that $d \geq 4$.
	Since for each $n \geq 1$, $b_n>b_0=2$, for each $n \geq 0$, we have $b_n>2^n$. 
	Hence since $d \geq 4$ and since $\text{Vol}(\Pc)=|2^{d-1}(a_1-1)|=2^{d+1}b_0\cdots b_{d-4}$,
	we have 
	$$b_d-1=b_0\cdots b_{d-1}	>2^{3d-6}b_0\cdots b_{d-4}>2^{d+1}b_0\cdots b_{d-4}=\text{Vol}(\Pc),$$
	as desired.
\end{proof}

\begin{acknowledgement}
	The author would like to thank Professor Takayuki Hibi and Professor Akihiro Higashitani for helping me in writing this paper,
	and to thank anonymous referees for reading the manuscript carefully.
\end{acknowledgement}

\end{document}